\providecommand{\dotdiv}{% Don't redefine it if available
  \mathbin{% We want a binary operation
    \vphantom{+}% The same height as a plus or minus
    \text{% Change size in sub/superscripts
      \mathsurround=0pt % To be on the safe side
      \ooalign{% Superimpose the two symbols
        \noalign{\kern-.35ex}% but the dot is raised a bit
        \hidewidth$\smash{\cdot}$\hidewidth\cr % Dot
        \noalign{\kern.35ex}% Backup for vertical alignment
        $-$\cr % Minus
      }%
    }%
  }%
}
\let\minus=\dotdiv
\def\ov{\overline}
\def\E{\mathbf{E}}
\def\A{\mathbf{A}}
\def\Lu{\mathbf{C}_3}
\def\He{\mathbf{H}_3}
\newtheorem*{question*}{Question}
\newtheorem{theorem}{Theorem}
\newtheorem{lemma}{Lemma}
\author{Tomasz Kowalski}
\title{The bottom of the lattice of BCK-varieties}
\begin{document}

\maketitle

\begin{abstract}
Confirming a conjecture of Pałasiński and Wroński, we show that the bottom otf he
lattice of subvarieties of BCK is Y-shaped.
\end{abstract}

\section{Introduction}

The class of BCK-algebras, introduced in Imai \& Iseki~\cite{II66} 
as an algebraic counterpart of BCK-logic 
and extensively studied ever since, can be viewed (dually)
as the class of all algebras $\A = \langle A; \minus, 0\rangle$ of type
$\langle 2,0\rangle$ such that $\A$ satisfies the following
identities:

\begin{enumerate}
\item $((x \minus y) \minus (x \minus z)) \minus (z \minus y) = 0$
\item $x \minus 0 = x$
\item $0 \minus x = 0$,
\end{enumerate}
and the quasi-identity:
\begin{enumerate}
\setcounter{enumi}{3}
\item $x \minus y = 0 = y \minus x \ \Rightarrow \  x = y$.
\end{enumerate}

The universe of any BCK-algebra is partially ordered by the relation 
$a\leq b$ iff $a\minus b = 0$.

Hereafter we will omit the $\minus$ sign, as in the following:
\begin{enumerate}
\setcounter{enumi}{4}
\item $xx = 0$, 
\item $x(xy)\leq y$, i.e., $(x(xy))y = 0$,
\item $(xy)z = (xz)y$.
\end{enumerate}
which are true in all BCK-algebras, and are mentioned here to facilitate 
reading the calculations to come. 

It is evident from the definition that the class of all BCK-algebras 
is a quasivariety. Yet it is not a variety, nor does the largest 
subvariety of BCK exist---as shown in Wroński~\cite{Wro83}, 
Wroński \& Kabziński~\cite{WK84}, respectively. 
Several subvarieties of BCK have been isolated and thoroughly
investigated, which has led to a substantial body of results 
(cf. e.g., Blok \& Raftery~\cite{BR95}). Nevertheless, 
the question of describing the bottom of the lattice of BCK-varieties, 
raised in Pałasiński \& Wroński~\cite{PW86}, has been remaining open.

Let us now recall some basic concepts that will be 
of use in the sequel.

An {\it ideal\/} $I$ of a BCK-algebra $\A$ is a subset of $A$, such that:
$0 \in I$; and whenever $b\in I, a\minus b\in I$, then $a\in I$ as well. 
By a {\it BCK-congruence\/} of $\A$ we mean a congruence $\Phi$ such that
$\A/_{\Phi}$ is a BCK-algebra. 
If $\Theta$ is any congruence of $\A$, then its equivalence class of $0$, 
$[0]_{\Theta}$, is an ideal of $\A$. Conversely, for each ideal $I$ of $\A$
there is a congruence $\Theta$ with $[0]_{\Theta} = I$. In general, 
this congruence need not be unique, and it need not be a BCK-congruence, 
either. However, the congruence $\Theta_I$, defined by:
$$(a,b)\in \Theta_I {\rm \ iff\ } a\minus b \in I {\rm \ and\ } 
b\minus a\in I$$
turns out to be the largest congruence with $[0]_{\Theta} = I$, and, 
at the same time, the unique BCK-congruence with this property.
   
By $I(a)$ we will mean the ideal generated by an element $a \in A$,
we have $b\in I(a)$ iff there is an $n\leq\omega$ with 
$ab^n = 0$, where the exprssion $ab^n$ is meant to abbreviate
(here, and later on) $(a\underbrace{b)b\dots b}_{n\ {\rm times}}$.

A BCK-algebra $\A$ is subdirectly irreducible if and only if
it has the smallest nontrivial ideal $I$,
$\A$ is simple if and only if $I = A$.

As it is easy to verify, there exist, up to isomorphism: 
exactly one two-element BCK-chain 
$\mathbf{C}_2 = \langle \{0,1\}; \minus, 0\rangle$; 
precisely two three-element BCK-chains, 
$\Lu = \langle \{0,{1\over 2}, 1\}; \minus, 0\rangle$, 
with $1\minus {1\over 2} = {1\over 2}$; and 
$\He = \langle \{0,{1\over 2}, 1\}; \minus, 0\rangle$, 
with $1\minus {1\over 2} = 1$. $\mathbf{C}_2$ is (dually) 
isomorphic to the implication reduct of the 
two-element Boolean algebra, while $\Lu$ and $\He$ are
(dually) isomorphic to the implicational reducts of the three-element
Łukasiewicz algebra, and the three-element totally ordered Heyting algebra, 
respectively.

Since every non-trivial BCK-algebra contains a subalgebra isomorphic to 
$\mathbf{C}_2$, the variety $\mathcal{C}_2$ generated by $\mathbf{C}_2$
is the unique atom of the lattice of BCK-varieties. To the description
of the next level of this lattice, the following question is crucial.

\begin{question*}[Question 2 in~\cite{PW86}]  Is it true that for every variety
$\mathcal{V}$ of BCK-algebras either $\mathcal{V}$ is contained in $\mathcal{C}_2$ or
$\{\Lu, \He\} \cap \mathcal{V}$ is nonempty?
\end{question*}

We will show that the answer to the above question is positive.

\section{Answering the question}

Consider a si BCK-algebra $\A$ nonisomorphic to $\mathbf{C}_2$.

\begin{lemma}\label{fct:zero}
If $\A$ has an atom, then $\Lu\leq\A$ or $\He\leq\A$.
\end{lemma}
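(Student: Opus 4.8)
The plan is to reduce the goal to producing elements $0 < p < q$ with $qp \in \{p,q\}$: then $\{0,p,q\}$ is a subalgebra, and $qp = p$ gives a copy of $\Lu$ while $qp = q$ gives a copy of $\He$. I would split the argument according to whether the ideal $I(a)$ generated by the atom is as small as possible, i.e. whether $I(a) = \{0,a\}$ or not.

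First, suppose $I(a) \ne \{0,a\}$. Choosing $x \in I(a)\setminus\{0,a\}$ and the least $n$ with $xa^n = 0$ (necessarily $n \ge 2$, since $xa = 0$ would put $x \le a$, hence $x \in \{0,a\}$), the element $w := xa^{n-2}$ satisfies $wa \ne 0$ by minimality of $n$, while $(wa)a = xa^n = 0$. As $a$ is an atom and $wa \le a$, this forces $wa = a$, with $w \notin \{0,a\}$. Now the instance of identity~(1) at $(x,y,z) = (w,a,0)$ collapses, using $wa = a$ and $w0 = w$, to the bare term $aw$; hence $aw = 0$, that is $a < w$, and $\{0,a,w\}$ is a copy of $\Lu$.

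The remaining, harder case is $I(a) = \{0,a\}$. Here I first invoke subdirect irreducibility: since $\{0,a\}$ is a nonzero ideal contained in the smallest nonzero ideal, it \emph{is} the smallest nonzero ideal, so it lies inside every nonzero ideal; in particular $a \in I(c)$ for each $c \ne 0$. Because $ac \le a$ forces $ac \in \{0,a\}$, while $ac = a$ would give $ac^m = a \ne 0$ for all $m$ (contradicting $a \in I(c)$), I conclude $ac = 0$, so that $a$ is the least nonzero element of $\A$. I then look for a fixed point $q = qa$ with $q \ne 0,a$, which at once yields $\{0,a,q\} \cong \He$. If no such $q$ exists, fix any $q \ne 0,a$ (available since $\A \ne \mathbf{C}_2$) and set $q_k := qa^k$; each $q_k$ is nonzero (as $q \notin I(a)$) and $\ne a$ (else $q \in I(a)$), so $a < q_k < q_{k-1}$ strictly, and identity~(6) together with $I(a) = \{0,a\}$ gives $q_{k}q_{k+1} = a$. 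The instance of~(1) at $(q_0,q_2,q_1)$ then shows $q_0q_2 \in \{0,a\}$, whence $q_0q_2 = a$; and finally the instance of~(1) at $(q_0,a,q_2)$ evaluates to $q_2q_3$ and so forces $q_2q_3 = 0$, contradicting $q_2q_3 = a$.

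I expect the main obstacle to be this last case. The genuinely new ingredient is subdirect irreducibility, used precisely to push $a$ down to the global minimum; without it the configuration $\mathbf{C}_2 \times \mathbf{C}_2$ (which satisfies $I(a) = \{0,a\}$ yet contains neither $\Lu$ nor $\He$) would survive. The delicate point is then extracting the contradiction from the infinite descending chain $q_0 > q_1 > q_2 > \cdots$, which rests on choosing exactly the right instances of the BCK identity~(1); everything else is routine BCK calculus from (5)--(7) and the atomicity of $a$.
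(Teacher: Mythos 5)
Your proof is correct, but it takes a genuinely different route from the paper's. The paper argues uniformly: having noted (as you do, via subdirect irreducibility) that the atom $a$ is the least nonzero element, it picks any $b\neq 0,a$ and observes by (6) that $(ba)((ba)a)\leq a$, hence equals $0$ or $a$; the case $=a$ rewrites by (7) to $\bigl(b((ba)a)\bigr)a=a$, giving $\Lu$ on $\{0,a,b((ba)a)\}$, while the case $=0$ gives $ba=(ba)a$, a fixed point, hence $\He$ on $\{0,a,ba\}$ --- three lines, no induction, no argument by contradiction. You instead split on whether $I(a)=\{0,a\}$. Your Case 1 is fine, and notably uses no subdirect irreducibility at all: $wa=a$ with $w\notin\{0,a\}$ and $a\leq w$ gives $\Lu$ (for the last inequality, $a=wa\leq w$ is even quicker than your instance of (1)). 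Your Case 2 also checks out: the instance of (1) at $(q_0,q_2,q_1)$, with $q_0q_1=q_1q_2=a$, reads $((q_0q_2)a)a=0$, so $q_0q_2\in I(a)=\{0,a\}$ --- this is precisely where the Case-2 hypothesis does its work --- and $q_0q_2=0$ is excluded by $q_2<q_0$; the instance at $(q_0,a,q_2)$ then collapses to $q_2q_3=0$, against $q_2q_3=a$. (A one-line alternative finish: by (7), $(q_0q_2)a=(q_0a)q_2=q_1q_2=a$, which is already incompatible with $q_0q_2\in\{0,a\}$.) One wording slip: it is not a priori clear that $\{0,a\}$ is ``contained in the smallest nonzero ideal''; the immediate direction is the reverse --- the smallest nontrivial ideal is contained in the nontrivial ideal $I(a)=\{0,a\}$ and hence equals it (that $a$ lies in the smallest ideal is also true, being the fact the paper leaves to the reader, but it needs an argument). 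What your longer route buys is a sharper structural dichotomy than the paper's per-element case split: $I(a)\supsetneq\{0,a\}$ always yields $\Lu$, while $I(a)=\{0,a\}$ forces a fixed point and hence $\He$; and your diagnosis that subdirect irreducibility enters only to push $a$ to the global minimum (witness $\mathbf{C}_2\times\mathbf{C}_2$) is accurate --- the paper needs it at exactly the same spot, to close the three-element subuniverse.
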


\begin{proof}
If $\A$ has an atom $a$, then $a$ must be 
unique, smaller than any other non-zero element,  
and must belong to the smallest ideal of $\A$. 
The reader is asked to verify that this is indeed so. 

Let then $b\in \A$ with $b\neq a$ and consider the element 
$(ba)((ba)a)$. Since,  
$(ba)((ba)a) \leq a$ and $a$ is an atom, we have only two possibilities: 

\begin{itemize}
\item[(i)] $(ba)((ba)a) = a$, thus, $\left(b\bigl((ba)a\bigr)\right)a = a$, in
which case, putting  $1 = b((ba)a)$ and $\frac{1}{2} = a$, we get   
$\Lu = \langle\{1, \frac{1}{2}, 0\}; \minus, 0\rangle  \leq\A$; 
\item[(ii)] $(ba)((ba)a) = 0$, thus, $ba \leq (ba)a$, hence 
$ba = (ba)a$, so putting $1 = ba$ and $\frac{1}{2} = a$, we obtain  
$\He = \langle\{1, \frac{1}{2}, 0\}; \minus, 0\rangle \leq\A$.\qedhere
\end{itemize}
\end{proof}

Let us now assume that $\A$ contains no subalgebra isomorphic to
either $\Lu$ or $\He$.
It follows then, by Lemma~\ref{fct:zero}, that the smallest ideal of $\A$ 
(indeed, any subalgebra of $\A$ with more than two elements) 
must be infinite. Let us choose a nonzero element $a$ from the smallest 
ideal of $A$.
The set $\{x\in A: x\leq a\}$ is a subuniverse of $\A$ and the subalgebra 
$\A|_a$, with this universe, is an infinite, simple subalgebra of $\A$
with the greatest element. Let us denote this algebra by $\E$, 
and its greatest element by~$1$.

By the fact the algebra $\E$ is simple we get that, for any elements 
$a, b \in E$ there is a $k < \omega$ such that $ab^k = 0$.
The smallest such $k$ we will call the
{\it height\/} of $a$ {\it relative to\/} $b$. 
Observe the following:

\begin{lemma}\label{fct:one}
If, for an element $a$ of $E$, there is an upper bound for 
its relative heights, then $\E$ has an atom. 
\end{lemma}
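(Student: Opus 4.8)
The plan is to exhibit an explicit atom of $\E$ as the last nonzero term of a suitable descending sequence $a, ab, ab^2,\dots$, using that the common bound on the heights forces such sequences to die out uniformly. First I would sharpen the hypothesis by passing to the least bound: let $N$ be the smallest integer with $ab^N = 0$ for every $b\in E$. This exists by assumption, and $N\geq 1$ since $ab^0 = a\neq 0$. By minimality of $N$ some $b_0\in E$ has $c := ab_0^{N-1}\neq 0$, while $cb_0 = ab_0^N = 0$, so $c\leq b_0$. I claim that this $c$ is an atom.

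Before verifying the claim I would record the two standard monotonicity facts for BCK subtraction, namely that it is order-preserving in the first argument and order-reversing in the second: $x\leq y$ implies $xz\leq yz$ and $zy\leq zx$. These follow from the axioms in the usual way (the second, for instance, is immediate from identity (1) on substituting and using (2)). From them a routine induction on $k$ gives: $d\leq e$ implies $ad^k\geq ae^k$ for all $k$. Indeed the inductive step chains $(ad^k)d\geq (ae^k)d\geq (ae^k)e$, where the first inequality uses the induction hypothesis together with monotonicity in the first argument, and the second uses $d\leq e$ with antitonicity in the second argument.

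With these tools in hand the claim is quick. Take any $d$ with $0\neq d\leq c$; I want to force $d=c$. Since $d\leq c\leq b_0$, the monotonicity lemma yields $ad^{N-1}\geq ab_0^{N-1}=c$. On the other hand $ad^N=0$, because $N$ bounds every relative height of $a$; this says $(ad^{N-1})d=0$, that is, $ad^{N-1}\leq d$. Combining the two inequalities with the hypothesis $d\leq c$ gives the chain $c\leq ad^{N-1}\leq d\leq c$, so all four terms coincide and $d=c$. Hence no nonzero element lies strictly below $c$, and $c$ is an atom of $\E$.

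The only genuine obstacle is locating the right candidate for the atom; once one recognizes that the last nonzero term $ab_0^{N-1}$ produced by an element $b_0$ of maximal relative height is necessarily minimal, the verification is essentially forced. The monotonicity induction is the essential computational ingredient, but it rests solely on the standard order behaviour of BCK subtraction and so presents no real difficulty.
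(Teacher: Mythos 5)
Your proof is correct and takes essentially the same route as the paper: the minimal bound $N$, a witness $b_0$ with $c = ab_0^{N-1} \neq 0$, and the chain $c \leq ad^{N-1} \leq d \leq c$ are exactly the paper's argument (there run as a proof by contradiction with $0 < c < ab^{n-1}$, here phrased directly). The only difference is cosmetic: you spell out the monotonicity facts and the induction $d\leq e \Rightarrow ad^k \geq ae^k$ that the paper uses tacitly.
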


\begin{proof}
Assume there is an upper bound for relative heights 
of an element $a$ of $\E$, i.e. a $k<\omega$ such that for every $x\in E$, 
we have $ax^k=0$. Let $n$ be the smallest such, and let's choose $b\in E$ 
with $ab^n=0$ and $ab^{n-1} \neq 0$. We will show $ab^{n-1}$ is an atom.

Suppose it is not. Then, there is a $c\in E$ with $0<c<ab^{n-1}$.
Since $ab^n=0$, we have $ab^{n-1}\leq b$ and thus $c\leq b$ as well.
Therefore, $ac^{n-1} \geq ab^{n-1}$, but from the fact than $n$ is the
greatest possible relative height of $a$, we obtain 
$ac^n=0$, i.e. $ac^{n-1}\leq c$. Together, it gives
$ab^{n-1} \leq ac^{n-1} \leq c$, contradicting the assumption. Hence,
$ab^{n-1}$ is an atom.        
\end{proof}

Now, to retain the assumption that neither $\Lu \leq\E$ nor $\He\leq\E$ 
we must also assume that there is no upper bound for relative heights 
of elements of $\E$.  

Let us take an $e\in E$ with $0<e<1$. Thus, 
there is an $1<n<\omega$ such that $1e^n=0$ and $1e^{n-1}>0$.
For $i=0,\dots,n-1$ define:
\begin{align*}
P_i &= 1e^i\\
Q_i &= P_i(P_{i+1}(\dots(P_{n-2}P_{n-1}))\dots).
\end{align*}

Notice that $P_0 =1$, $P_{n-1}e=0$, $P_{n-1}>0$, 
and $Q_i=P_iQ_{i+1}$.

\begin{lemma}\label{fct:two}
For $i=1,\dots,n-1$, $Q_iQ_{i-1}=0$. 
\end{lemma}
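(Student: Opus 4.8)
The statement $Q_iQ_{i-1}=0$ says precisely that $Q_i\le Q_{i-1}$, so what I want to establish is that the finite sequence $Q_{n-1},Q_{n-2},\dots,Q_0$ is weakly increasing. Since $\E$ need not be a chain, I cannot argue by comparing magnitudes; the whole argument will instead run through the identities (6) and (7) together with the order-monotonicity of $\minus$ in each argument (both monotonicities are easy consequences of (1)--(3)), used repeatedly against the relations $P_i=P_{i-1}e$ (immediate from $P_i=1e^i$) and $Q_i=P_iQ_{i+1}$.

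The engine of the proof will be a single self-referential inequality,
$$Q_iQ_{i-1}\le Q_{i+2}Q_{i+1}\qquad(1\le i\le n-3).$$
To get it I first rewrite $Q_i=P_iQ_{i+1}=(P_{i-1}e)Q_{i+1}=(P_{i-1}Q_{i+1})e$ by (7), so that the trailing $\minus e$ can be shuffled outward; two further applications of (7) then turn $Q_iQ_{i-1}$ into $\bigl((P_{i-1}Q_{i-1})Q_{i+1}\bigr)e$. Now (6) collapses $P_{i-1}Q_{i-1}=P_{i-1}(P_{i-1}Q_i)\le Q_i$, and monotonicity upgrades this to $Q_iQ_{i-1}\le (Q_iQ_{i+1})e$. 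A second pass of the same manipulation rewrites $(Q_iQ_{i+1})e$ as $(P_{i+1}Q_{i+1})Q_{i+1}$, whereupon a final use of (6) in the form $P_{i+1}Q_{i+1}=P_{i+1}(P_{i+1}Q_{i+2})\le Q_{i+2}$ delivers the displayed bound. The point to watch is that the naive route---applying identity (1) once, which only yields $Q_iQ_{i-1}\le Q_ie$---is too weak to close anything; the real gain comes from pushing the estimate forward by \emph{two} indices, which is exactly what makes the recursion self-supporting.

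With the inequality in hand the lemma follows by downward induction in steps of two, anchored at the top of the sequence. For $Q_{n-1}Q_{n-2}$ I use that $Q_{n-1}=P_{n-1}$ together with identity (1) (taking $x=P_{n-2}$, $y=e$, $z=Q_{n-1}$), which bounds it by $Q_{n-1}e=P_{n-1}e=0$. For $Q_{n-2}Q_{n-3}$ I run only the first half of the engine to obtain $Q_{n-2}Q_{n-3}\le(P_{n-1}Q_{n-1})Q_{n-1}$ and then observe $P_{n-1}Q_{n-1}=P_{n-1}P_{n-1}=0$, so the bound is $0$ by (3); note this avoids the term $Q_n$, which is never defined. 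Every remaining $Q_iQ_{i-1}$ with $1\le i\le n-3$ is then squeezed to $0$ by iterating $Q_iQ_{i-1}\le Q_{i+2}Q_{i+1}$ until the index reaches one of these two vanishing top terms.

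I expect the main difficulty to be bookkeeping rather than conceptual: arranging the alternation of (7)-reshuffles and (6)-collapses so that the occurrences of $e$ and the subscripts align to produce $Q_{i+2}Q_{i+1}$ on the nose, and making sure the two boundary indices are dispatched by the separate base computations rather than by the generic inequality (whose derivation silently presupposes that $Q_{i+1}$ and $Q_{i+2}$ both exist).
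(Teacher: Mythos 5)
Your proof is correct and takes essentially the same route as the paper: the identical step-two downward induction anchored at the two top base cases, driven by the same key inequality $Q_iQ_{i-1}\le Q_{i+2}Q_{i+1}$ (which is exactly the paper's inductive step $Q_{i-2}Q_{i-3}\le Q_iQ_{i-1}$ reindexed), with your second base case matching the paper's computation almost verbatim. The only differences are cosmetic: the paper closes its engine with identity (1) in the form $(P_{i+1}Q_{i+1})(P_{i+1}Q_{i+2})\le Q_{i+2}Q_{i+1}$ where you use (6) plus monotonicity, and conversely your first base case invokes (1) directly where the paper shuffles with (7) and then applies (6).
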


\begin{proof} We proceed by downward induction\footnote{I am
    indebted to Andrzej Wroński for presenting my long calculation in
    this neat way.} on $i$ with step 2.   

\noindent  
Base step, for $i=n-1$ and $i=n-2$:
$$
Q_{n-1}Q_{n-2} = P_{n-1}(P_{n-2}P_{n-1})
= (P_{n-2}(P_{n-2}P_{n-1}))e \leq P_{n-1}e = 0,
$$
since $1e^n =0$. Next,
\begin{align*}
Q_{n-2}Q_{n-3} &= (P_{n-2}P_{n-1})Q_{n-3} =
                 ((P_{n-3}Q_{n-3})P_{n-1})e\\
  &= \bigl((P_{n-3}(P_{n-3}Q_{n-2}))P_{n-1}\bigr)e \leq (Q_{n-2}P_{n-1})e\\
 &= ((P_{n-2}P_{n-1})P_{n-1})e = 
(P_{n-1}P_{n-1})P_{n-1} = 0.
\end{align*}
   
\noindent
Inductive step, two levels down:
\begin{align*}
Q_{i-2}Q_{i-3} &= 
                 (P_{i-2}(P_{i-1}Q_i))(P_{i-3}(P_{i-2}Q_{i-1}))\\
               &=\bigl((P_{i-3}(P_{i-1}Q_i))(P_{i-3}(P_{i-2}Q_{i-1}))\bigr)e\\
               &=\bigl((P_{i-3}(P_{i-3}(P_{i-2}Q_{i-1})))(P_{i-1}Q_i)\bigr)e\\
               &\leq ((P_{i-2}Q_{i-1})(P_{i-1}Q_i))e\\
               &= (P_{i-1}Q_{i-1})(P_{i-1}Q_i) \leq Q_iQ_{i-1} = 0,
\end{align*}
where the last equality follows by inductive hypothesis.
\end{proof}

With the help of Lemma~\ref{fct:two}, we easily obtain: 
\begin{lemma}\label{fct:three}
The following hold:
\begin{itemize}
\item[(i)] $Q_1Q_0=0$, in other words $Q_1(1Q_1)=0$;
\item[(ii)] $Q_0Q_1\leq e$, in other words $(1Q_1)Q_1\leq e$.
\end{itemize}
\end{lemma}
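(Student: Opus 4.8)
The plan is to get part~(i) for free from Lemma~\ref{fct:two} and to reduce part~(ii) to a single further instance of the same lemma. For~(i), note first that $P_0 = 1$, so $Q_0 = P_0 Q_1 = 1 Q_1$; this identifies the two displayed forms. The assertion $Q_1 Q_0 = 0$ is then nothing but Lemma~\ref{fct:two} read at $i = 1$, so there is nothing to prove.

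For~(ii) I would work with the equivalent statement $(Q_0 Q_1)e = 0$ (recall that $a \le e$ means $ae = 0$) and relocate the trailing factor $e$ by the exchange law $(xy)z = (xz)y$. This law gives $(Q_0 Q_1)e = (Q_0 e)Q_1$; substituting $Q_0 = 1 Q_1$ and applying the exchange law once more, $Q_0 e = (1 Q_1)e = (1e)Q_1 = P_1 Q_1$, so that $(Q_0 Q_1)e = (P_1 Q_1)Q_1$. Hence it suffices to prove the inequality $P_1 Q_1 \le Q_1$, which is where the only real content lies.

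To establish $P_1 Q_1 \le Q_1$ I would unfold $Q_1 = P_1 Q_2$ and apply the law $x(xy) \le y$, giving $P_1 Q_1 = P_1(P_1 Q_2) \le Q_2$; since Lemma~\ref{fct:two} at $i = 2$ says $Q_2 Q_1 = 0$, i.e. $Q_2 \le Q_1$, transitivity of $\le$ yields $P_1 Q_1 \le Q_1$, and therefore $(P_1 Q_1)Q_1 = 0$, as required. The only thing to watch is the degenerate case $n = 2$, in which $Q_2$ is undefined and $Q_1 = P_1$; there the chain above collapses, but the conclusion is immediate, since $P_1 Q_1 = P_1 P_1 = 0 \le Q_1$. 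The argument is short once one spots the right move: the main --- and really the only --- obstacle is recognising that the exchange law rewrites $(Q_0 Q_1)e$ as $(P_1 Q_1)Q_1$, after which everything reduces to Lemma~\ref{fct:two}.
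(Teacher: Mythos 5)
Your proof is correct and takes essentially the same route as the paper's: part~(i) is read off as the $i=1$ instance of Lemma~\ref{fct:two}, and part~(ii) rewrites $(Q_0Q_1)e$ as $(P_1Q_1)Q_1$ via the exchange law $(xy)z=(xz)y$, then finishes by unfolding $Q_1=P_1Q_2$, applying $x(xy)\leq y$, and invoking $Q_2Q_1=0$ from Lemma~\ref{fct:two}. Your explicit handling of the degenerate case $n=2$, where $Q_2$ is undefined and the paper's displayed chain silently presupposes it, is a small extra point of care rather than a divergence.
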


\begin{proof}
The first is a particular case of Lemma~\ref{fct:two}; as for the second:
$(Q_0Q_1)e = ((1Q_1)Q_1)e = (P_1Q_1)Q_1 = 
(P_1(P_1Q_1))Q_1 \leq Q_2Q_1 = 0$, by Lemma~\ref{fct:two}.
\end{proof}

Consider a descending sequence $\ov{e} = (e_n)_{n<\omega}$ 
of elements of $E$, converging to 0. Such a sequence exists, as $\E$
has no atoms. Of course, $\ov{e}$ is an element of $\E^\omega$. Consider
$I(\ov{e})$, the ideal generated by $\ov{e}$ in $\E^\omega$. Notice that:

\begin{lemma}\label{fct:four}
No constant sequence belongs to $I(\ov{e})$.
\end{lemma}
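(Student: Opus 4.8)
The plan is to reduce the statement to a single sharply quantified condition and then run a downward induction that collapses it. Throughout I take ``constant sequence'' to mean a nonzero one, since the zero sequence lies in every ideal and is excluded tacitly.

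First I would invoke the characterization of principal ideals recalled in the introduction. Since BCK is a quasivariety, it is closed under products, so $\E^\omega$ is a BCK-algebra and $\ov{e}$ is one of its elements; hence a sequence $\ov{x}$ lies in $I(\ov{e})$ precisely when $\ov{x}\,\ov{e}^{\,m}=\ov 0$ for some finite $m$. Reading this coordinatewise, $\ov{x}\in I(\ov{e})$ iff there is a \emph{single} $m<\omega$ with $x_k e_k^{m}=0$ for every $k$. The essential feature, and the reason a constant sequence cannot sneak in, is that one and the same exponent $m$ must serve all coordinates at once. Thus, writing $\ov c=(c,c,\dots)$ with $c\neq 0$, it suffices to prove: for no $m<\omega$ does $c e_k^{m}=0$ hold for all $k$.

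The core is a descent on $m$. Assume $c e_k^{m}=0$ for all $k$ and set $u_k=c e_k^{m-1}$, so that $u_k e_k=0$, i.e.\ $u_k\leq e_k$. Because $\minus$ is isotone in its first argument and antitone in its second, the map $x\mapsto c x^{m-1}$ is order-reversing; as $\ov e$ is descending, $(u_k)$ is therefore non-decreasing. Now fix an index $j$. For $k\geq j$ we get $u_{j}\leq u_k\leq e_k$, while for $k\leq j$ we have $u_{j}\leq e_{j}\leq e_k$; hence $u_{j}\leq e_k$ for \emph{every} $k$. Since $\ov e$ converges to $0$, so that $0$ is the only lower bound of all the $e_k$, this forces $u_{j}=0$. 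As $j$ was arbitrary, $c e_k^{m-1}=0$ for all $k$.

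Iterating this step lowers the exponent from $m$ to $m-1$ to $\dots$ to $0$, at which point $c e_k^{0}=c=0$, contradicting $c\neq 0$. The one delicate point is the third paragraph: converting the coordinatewise hypothesis into a genuine lower bound of the \emph{whole} sequence $\ov e$, which is exactly what lets ``convergence to $0$'' bite. This rests on the order-reversal of $x\mapsto c x^{m-1}$ combined with the trick of splitting the indices into $k\geq j$ and $k\leq j$; everything else is bookkeeping.
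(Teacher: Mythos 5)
Your proof is correct, but it takes a genuinely different route from the paper's. The paper's own argument stays inside the relative-height framework it has set up: from $\ov{a}\,\ov{e}^{\,n}=0$ it takes an \emph{arbitrary} $d>0$, uses convergence in the strong, coinitial form ($\forall d>0\ \exists i:\ e_i\leq d$) together with the monotonicity laws to get $a d^n\leq a{e_i}^n=0$, and concludes that $n$ bounds all relative heights of $a$, contradicting the standing assumption (extracted from atomlessness via Lemma~\ref{fct:one}) that no such bound exists. You instead run a self-contained descent on the exponent: the order-reversal of $x\mapsto cx^{m-1}$ applied to the descending sequence, combined with splitting the indices at $j$, turns $u_j=c{e_j}^{m-1}$ into a lower bound of the \emph{entire} sequence, which is then killed by ``$0$ is the only lower bound of the $e_k$,'' and iterating drives the exponent down to $c=0$. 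Every step checks out, including the explicit restriction to nonzero constants, which the paper needs too but leaves tacit. The comparison cuts both ways. Your argument never invokes relative heights or Lemma~\ref{fct:one}, and it uses a strictly weaker property of $\ov{e}$: only that its infimum is $0$, which follows from the paper's coinitial convergence (given strict descent or atomlessness), so your proof is valid a fortiori and establishes the lemma in any BCK-algebra admitting a descending sequence with infimum $0$, whether or not relative heights are unbounded. The paper's proof, in exchange, is a one-step reduction that makes visible the conceptual chain the whole section is organized around: atomless $\Rightarrow$ unbounded relative heights $\Rightarrow$ no constant sequence in $I(\ov{e})$.
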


\begin{proof}
Suppose that $\ov{a} = \langle a,a,\dots,a,\dots\rangle$ 
belongs to $I(\ov{e})$. This means, there is an $n<\omega$ such that
$\ov{a}\ov{e}^n = 0$, i.e. $\forall i<\omega: a{e_i}^n = 0$. Since $\ov{e}$
converges to $0$, we have 
$\forall d\in E, d>0 \exists i<\omega: e_i\leq d$.
Therefore, $ae_i\geq ad$, and further $a{e_i}^k\geq ad^k$,
for any $k>0$. Hence, in particular, $0 = a{e_i}^n\geq ad^n$, 
and thus $n$ is an upper bound for relative heights of $a$ 
in $\E$. This contradicts the assumption of there being no such a bound.
\end{proof}

Take now the largest congruence $\Theta$ on $\E^\omega$ with 
$[0]_\Theta = I(\ov{e})$. By Lemma~\ref{fct:three}, this congruence is neither trivial 
nor full, and, 
moreover, $|{\E^\omega}/_\Theta| \geq |\E|$. 

For any $e_i$, let us write $q_i$ for the element $Q_1$, defined as before, 
for this particular $e_i$. Consider the sequence
$\ov{q} = \langle q_0, q_1, \dots, q_i, \dots\rangle$
of elements of $E$, and let $q = {\ov{q}}/_\Theta$ so that 
$q\in E^\omega/\Theta$. 

\begin{lemma}\label{fct:five}
The quotient algebra ${E^\omega}/_\Theta$ verifies $1q = q$.  
Hence $\Lu\in \mathcal{V}(\E)$.
\end{lemma}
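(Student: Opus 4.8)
The plan is to settle the two claims separately: the displayed identity $1q=q$ will fall out of Lemma~\ref{fct:three} read coordinatewise, and the membership $\Lu\in\mathcal V(\E)$ will then follow by exhibiting a three-element subalgebra of the quotient, the only delicate point being that $q\notin\{0,1\}$.

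First I would translate $1q=q$ into a statement about $\Theta$. Since $\Theta=\Theta_{I(\ov e)}$ is the BCK-congruence determined by $I(\ov e)$, we have $1q=q$ in ${\E^\omega}/_\Theta$ exactly when both $\ov1\,\ov q\minus\ov q$ and $\ov q\minus\ov1\,\ov q$ lie in $I(\ov e)$, where $\ov1=\langle1,1,\dots\rangle$ is the top of $\E^\omega$. In the $i$-th coordinate the entry of $\ov1\,\ov q$ is $1q_i=1Q_1=Q_0$ and that of $\ov q$ is $Q_1$, both computed from $e_i$; so, recalling that Lemma~\ref{fct:three} is phrased precisely as $Q_1(1Q_1)=0$ and $(1Q_1)Q_1\le e$, the two sequences in question are, coordinatewise, $Q_1Q_0$ and $Q_0Q_1$. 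By Lemma~\ref{fct:three}(i) we have $Q_1Q_0=0$ in every coordinate, so $\ov q\minus\ov1\,\ov q=\ov0\in I(\ov e)$; by Lemma~\ref{fct:three}(ii) we have $Q_0Q_1\le e_i$ in every coordinate, so $\ov1\,\ov q\minus\ov q\le\ov e$. As $\ov e\in I(\ov e)$ and ideals are downward closed (from $x\le y$ we get $xy=0\in I$, whence $x\in I$ whenever $y\in I$), the latter sequence also lies in $I(\ov e)$. This yields $(\ov1\,\ov q,\ov q)\in\Theta$, that is, $1q=q$.

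For the consequence I would check that $\{0,q,1\}$ is a subalgebra isomorphic to $\Lu$. The quotient is nontrivial: since $\ov1$ is a constant sequence, Lemma~\ref{fct:four} gives $\ov1\notin I(\ov e)$, so $\ov1\not\equiv\ov0$ and the top $1$ and bottom $0$ of ${\E^\omega}/_\Theta$ are distinct. Now $q\ne0$, for $q=0$ would give $1q=1\minus0=1$ by the axiom $x\minus0=x$, while $1q=q=0$, forcing $1=0$; and $q\ne1$, for $q=1$ would give $1q=1\minus1=0$ by $xx=0$, while $1q=q=1$, again forcing $0=1$. Hence $0<q<1$. Closure of $\{0,q,1\}$ under $\minus$ is routine, using $q\le1$ together with $x\minus0=x$, $0\minus x=0$, $xx=0$ and the identity $1q=q$ just established; the resulting operation table coincides with that of $\Lu$ under $0\mapsto0$, $\frac12\mapsto q$, $1\mapsto1$. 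Thus $\Lu\le{\E^\omega}/_\Theta$, and since ${\E^\omega}/_\Theta$ is a quotient of a power of $\E$ and so belongs to $\mathcal V(\E)$, we conclude $\Lu\in\mathcal V(\E)$.

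The bulk of the work has already been carried out in Lemmas~\ref{fct:two} and~\ref{fct:three}, so I do not expect a real obstacle here; the one thing to watch is bookkeeping, namely that $\Theta$ is governed by two-sided membership in $I(\ov e)$ and that Lemma~\ref{fct:three} supplies precisely the two one-sided facts $Q_1Q_0=0$ and $Q_0Q_1\le e$ needed for the two directions. The other point worth stating explicitly is that Lemma~\ref{fct:four} is exactly what excludes the degenerate readings $q=0$ and $q=1$, so that the subalgebra we obtain is genuinely $\Lu$ and not a one- or two-element collapse.
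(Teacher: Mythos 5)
Your proof is correct and takes essentially the same route as the paper's: Lemma~\ref{fct:three}, read coordinatewise, gives two-sided membership of the relevant differences in $I(\ov{e})$, hence $1q=q$ by the characterization of the BCK-congruence $\Theta$, after which $\{0,q,1\}$ is a subuniverse isomorphic to $\Lu$. You merely spell out what the paper leaves as an observation, namely invoking Lemma~\ref{fct:four} for nontriviality, using the identity $1q=q$ itself to exclude $q\in\{0,1\}$, and checking the operation table.
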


\begin{proof}
By Lemma~\ref{fct:three}, we have
$\ov{q}(1\ov{q}) = 0 \in I(\ov{e})$ and 
$(1\ov{q})\ov{q}\leq \ov{e} \in I(\ov{e})$, as well. 
Thus, the first part follows by the definition of 
the congruence $\Theta$.

For the second part, observe that
$\{1,q,0\} \subseteq {E^\omega}/_\Theta$ is a subuniverse of
${\E^\omega}/_\Theta$ and the algebra with this universe is isomorphic to
$\Lu$. 
\end{proof}

\begin{theorem}
If a variety $\mathcal{V}$ of BCK-algebras is not contained in
$\mathcal{C}_2$, then $\{\Lu, \He\} \cap \mathcal{V}$ is nonempty.
\end{theorem}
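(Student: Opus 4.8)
The plan is to reduce the statement to the subdirectly irreducible members of $\mathcal{V}$ and then feed such a member into the machinery already assembled in Lemmas~\ref{fct:zero}--\ref{fct:five}. By Birkhoff's subdirect representation theorem, $\mathcal{V}$ is generated by its subdirectly irreducible algebras. Since $\mathcal{C}_2 = \mathcal{V}(\mathbf{C}_2)$, if every si member of $\mathcal{V}$ were isomorphic to $\mathbf{C}_2$ then every member of $\mathcal{V}$ would be a subdirect product of copies of $\mathbf{C}_2$, giving $\mathcal{V}\subseteq\mathcal{C}_2$, contrary to hypothesis. Hence I would first fix a subdirectly irreducible $\A\in\mathcal{V}$ with $\A\not\cong\mathbf{C}_2$ — exactly the kind of algebra fixed at the opening of Section~2.

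Next I would split into two cases according to whether $\A$ contains a copy of $\Lu$ or of $\He$. If $\Lu\leq\A$ or $\He\leq\A$, then, since a variety is closed under subalgebras and $\A\in\mathcal{V}$, we immediately obtain $\Lu\in\mathcal{V}$ or $\He\in\mathcal{V}$, and we are done.

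The substantive case is when $\A$ contains neither $\Lu$ nor $\He$ as a subalgebra. Here I would run the entire development following Lemma~\ref{fct:zero}. The contrapositive of that lemma forces $\A$ to be atomless, so its least ideal is infinite; choosing a nonzero $a$ in it yields the infinite, simple subalgebra $\E=\A|_a$, which (being non-trivial and having neither $\Lu$ nor $\He$ as a subalgebra) is again atomless by Lemma~\ref{fct:zero}. Lemma~\ref{fct:one} then excludes any uniform bound on relative heights, and Lemmas~\ref{fct:two}--\ref{fct:four} supply precisely the inputs $Q_1(1Q_1)=0$, $(1Q_1)Q_1\leq e$, and the absence of constant sequences from $I(\ov{e})$ that Lemma~\ref{fct:five} converts into a copy of $\Lu$ inside ${\E^\omega}/_\Theta$. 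Thus $\Lu\in\mathcal{V}(\E)$; and since $\E\leq\A\in\mathcal{V}$ gives $\E\in\mathcal{V}$ and hence $\mathcal{V}(\E)\subseteq\mathcal{V}$, we conclude $\Lu\in\mathcal{V}$.

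Because the conceptual load is carried entirely by the lemmas, the theorem itself is a matter of careful assembly, and the two points I would be most wary of are these. The reduction to a single subdirectly irreducible witness not isomorphic to $\mathbf{C}_2$ is what makes the argument possible at all, and it rests squarely on Birkhoff's theorem. Moreover, in the substantive case the copy of $\Lu$ is produced as a homomorphic image of a power of $\E$, not as a subalgebra of $\A$, so it is closure of $\mathcal{V}$ under all of $\mathbf{H}$, $\mathbf{S}$ and $\mathbf{P}$ — rather than mere subalgebra containment — that places $\Lu$ in $\mathcal{V}$. Collecting the cases yields $\{\Lu,\He\}\cap\mathcal{V}\neq\emptyset$ throughout.
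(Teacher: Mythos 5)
Your proposal is correct and follows essentially the same route as the paper's own proof: fix a subdirectly irreducible $\A\in\mathcal{V}$ with more than two elements, dispose of the case where $\Lu$ or $\He$ embeds in $\A$ (equivalently, by Lemma~\ref{fct:zero}, the case where $\A$ has an atom) via closure under subalgebras, and otherwise pass to the infinite simple subalgebra $\E$ with unbounded relative heights and invoke Lemma~\ref{fct:five} to place $\Lu\in\mathcal{V}(\E)\subseteq\mathcal{V}$. Your explicit appeals to Birkhoff's subdirect representation theorem and to closure under all of $\mathbf{H}$, $\mathbf{S}$, $\mathbf{P}$ (since $\Lu$ arises as a quotient of a power of $\E$, not a subalgebra of $\A$) only spell out what the paper leaves implicit.
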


\begin{proof}
Since $\mathcal{V} \not\subseteq \mathcal{C}_2$, there is a subdirectly 
irreducible algebra $\A\in \mathcal{V}$  with more than two elements. If
$\A$ has an atom, the result follows by Lemma~\ref{fct:zero}.
If $\A$ has no atoms, 
then it contains an infinite simple algebra $\E$ without an upper bound
for relative heights of its elements. Then the result follows by
Lemma~\ref{fct:five}.  
\end{proof} 

\bibliographystyle{plain}
\bibliography{BCK-bottom}

\begin{thebibliography}{1}

\bibitem{BR95}
W.~J. Blok and J.~G. Raftery.
\newblock On the quasivariety of {BCK}-algebras and its subvarieties.
\newblock {\em Algebra Universalis}, 33(1):68--90, 1995.

\bibitem{II66}
Yasuyuki Imai and Kiyoshi Is\'eki.
\newblock On axiom systems of propositional calculi. {XIV}.
\newblock {\em Proc. Japan Acad.}, 42:19--22, 1966.

\bibitem{PW86}
Marek Pa\l{}asi\'nski and Andrzej Wro\'nski.
\newblock Eight simple questions concerning {BCK}-algebras.
\newblock {\em Rep. Math. Logic}, (20):87--91, 1986.

\bibitem{Wro83}
Andrzej Wro\'nski.
\newblock B{CK}-algebras do not form a variety.
\newblock {\em Math. Japon.}, 28(2):211--213, 1983.

\bibitem{WK84}
Andrzej Wro\'nski and Jacek~K. Kabzi\'nski.
\newblock There is no largest variety of {BCK}-algebras.
\newblock {\em Math. Japon.}, 29(4):545--549, 1984.

\end{thebibliography}

\end{document}